\newtheorem{thm}{Theorem}
\newtheorem{lem}[thm]{Lemma}
\newcommand{\Ker}{\operatorname{Ker}\nolimits}
\newcommand{\Cok}{\operatorname{Cok}\nolimits}
\newcommand{\cok}{\operatorname{cok}\nolimits}
\newcommand{\im}{\operatorname{im}\nolimits}
\newcommand{\coim}{\operatorname{coim}\nolimits}
\renewcommand{\Im}{\operatorname{Im}\nolimits}
\newcommand{\Coim}{\operatorname{Coim}\nolimits}
\begin{document}

\title{\vspace{-15pt}Exact couples in semiabelian categories revisited\vspace{-7pt}}
\author{~}
%\author{Yaroslav Kopylov\,\MakeLowercase{$^{\text{a}}$} and Sven-Ake Wegner\,\MakeLowercase{$^{\text{b},\,1}$}}

\renewcommand{\thefootnote}{}
\hspace{-1000pt}\footnote{\hspace{5.5pt}2010 \emph{Mathematics Subject Classification}: Primary 18G50; Secondary 18G40.}
\hspace{-1000pt}\footnote{\hspace{5.5pt}\emph{Key words and phrases}: Exact couple, spectral sequence, semiabelian category, quasiabelian category.\vspace{1.6pt}}
\hspace{-1000pt}\footnote{$^{\text{a}}$\,Sobolev Institute of Mathematics, Pr.~Akad.~Koptyuga 4, 630090, Novosibirsk, Russia \& Novosibirsk State University,\linebreak\phantom{x}\hspace{13pt}ul.~Pirogova 2, 630090, Novosibirsk, Russia, eMail: yakop@nsc.math.ru.\vspace{0.75pt}}
\hspace{-1000pt}\footnote{\hspace{-6.8pt}$^{\text{b},1}$\,Corresponding author: Sobolev Institute of Mathematics, Pr.~Akad.~Koptyuga 4, 630090, Novosibirsk, Russia, Phone:\linebreak\phantom{x}\hspace{12.5pt}+7\hspace{1.2pt}(8)\hspace{1.2pt}383\hspace{1.2pt}/\hspace{1.2pt}363\hspace{1.2pt}-\hspace{1.2pt}4648, Fax:\hspace{1.2pt}\hspace{1.2pt}+7\hspace{1.2pt}(8)\hspace{1.2pt}383\hspace{1.2pt}/\hspace{1.2pt}333\hspace{1.2pt}-\hspace{1.2pt}2598, eMail: wegner@math.uni-wuppertal.de.}

\begin{abstract}\vspace{15pt}
Consider an exact couple in a semiabelian category in the sense of Palamodov, i.e., in an additive category in which every morphism has a kernel as well as a cokernel and the induced morphism between coimage and image is always monic and epic. Assume that the morphisms in the couple are strict, i.e., they induce even isomorphisms between their corresponding coimages and images. We show that the classical construction of Eckmann and Hilton in this case produces two derived couples which are connected by a natural bimorphism. The two couples correspond to the a priori distinct co\-ho\-mo\-lo\-gy objects, the left resp.~right cohomology, associated with the initial exact couple. The de\-ri\-va\-tion process can be iterated under additional assumptions. 
\end{abstract}

\maketitle

\begin{picture}(0,0)
\put(114,147){{\sc\MakeLowercase{Yaroslav Kopylov}}\,$^{\text{a}}$ {\sc and} {\sc\MakeLowercase{Sven-Ake Wegner}}\,$^{\text{b}, 1}$}
\put(203,125){June 7, 2014}

\end{picture}

\vspace{-30pt}

\section{Preliminaries}\label{PRE}

The aim of this note is to elaborate rigorously in which way the classical approach of Eckmann, Hilton \cite{EH} for constructing exact couples, see Massey \cite{M}, generalizes to semiabelian categories. For quasiabelian categories this was carried out by Kopylov \cite{K} in 2004; the negative answer to Ra\u{\i}kov's conjecture given around 2008, see Rump \cite{R}, showed however that the class of semiabelian categories is strictly larger. We point out that semiabelian categories appear in different branches of mathematics, see Ko\-py\-lov, Wegner \cite[Section 1]{KW} for more details and references. For recent results on exact couples in non-additive situations we refer to Grandis \cite{G}.
\smallskip
\\In the sequel, $\mathcal{A}$ always denotes a preabelian category, i.e., $\mathcal{A}$ is additive and every morphism $\alpha$ in $\mathcal{A}$ has a kernel and a cokernel. We denote by $\bar{\alpha}\colon \Coim\alpha\rightarrow\Im\alpha$ the canonical morphism. The category $\mathcal{A}$ is semiabelian if the induced morphism $\bar{\alpha}$ is always monic and epic, v.i.z., a bimorphism. We say that a morphism $\alpha$ is strict if $\bar{\alpha}$ is an isomorphism. We say that a kernel $\alpha$ is semistable if all its pushouts along arbitrary morphisms are again kernels. Semi-stable cokernels are defined dually. An exact couple in $\mathcal{A}$ is a diagram of the form
\begin{equation}\label{ex-coup}
\begin{tikzcd}
D \arrow[]{rr}{\alpha}& &D \arrow[]{ld}{\beta}\\
&E \arrow[]{ul}{\gamma}
\end{tikzcd}
\end{equation}
such that $\im\alpha=\ker\beta$, $\im\beta=\ker\gamma$ and $\im\gamma=\ker\alpha$ hold. If $\mathcal{A}$ is semiabelian, it is easy to see that the latter equations are equivalent to $\cok\alpha=\coim\beta$, $\cok\beta=\coim\gamma$ and $\cok\gamma=\coim\alpha$, respectively. E.g., $\coim\beta=\cok\im\alpha=\cok((\im\alpha)\bar{\alpha}\coim\alpha)=\cok\alpha$ and the dual computation yield the first equivalence.

\section{Results}\label{RESULTS}

\begin{thm}\label{THM-1} Let $\mathcal{A}$ be semiabelian and consider the exact couple \eqref{ex-coup}. Assume that $\alpha$, $\beta$ and $\gamma$ are strict. Then the Eckmann-Hilton construction, see Section \ref{EHC}, gives rise to the following two diagrams
\begin{equation}\label{der-coup}
\begin{tikzcd}
D_1 \arrow[]{rr}{\alpha_1}& &D_1 \arrow[]{ld}{\beta_1^-}\\
&E_1^- \arrow[]{ul}{\gamma_1^-}
\end{tikzcd}\hspace{20pt}
\begin{tikzcd}
D_1 \arrow[]{rr}{\alpha_1}& &D_1 \arrow[]{ld}{\beta_1^+}\\
&E_1^+ \arrow[]{ul}{\gamma_1^+}
\end{tikzcd}
\end{equation}
which we call the \emph{left} resp.~the \emph{right derived couple}. The diagrams in \eqref{der-coup} have the following properties.\vspace{3pt}

\begin{compactitem}
\item[(i)] Both diagrams are exact couples.\vspace{3pt}

\item[(ii)] With $\partial=\beta\gamma$ we get $H^-(E,\partial)=E_1^-$ and $H^+(E,\partial)=E_1^+$. Here, the \emph{left} resp.~\emph{right cohomology} is defined via $H^-(E,\partial)=\Cok(\theta\colon\Im\partial\rightarrow\Ker\partial)$ resp.~$H^+(E,\partial)=\Ker(\tau\colon\Cok\partial\rightarrow\Coim\partial)$, where $\theta$ and $\tau$ are the natural maps.\vspace{3pt}

\item[(iii)] There is a canonical bimorphism $\omega\colon E_1^-\rightarrow E_1^+$ satisfying the equations $\omega\beta_1^-=\beta_1^+$, $\gamma_1^-=\gamma_1^+\omega$ and $(\ker\tau)\omega(\cok\theta)=(\cok\partial)(\ker\partial)$. The morphism $\omega$ is uniquely determined by the third equation and it is an isomorphism if $\ker\partial$ or $\cok\partial$ is semistable.
\end{compactitem}
\end{thm}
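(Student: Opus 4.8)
The plan is to run the Eckmann--Hilton recipe carefully, keeping track of the fact that in the semiabelian setting image and coimage do not coincide, so two different ``$E_1$'s'' appear. First I would set $D_1=\im\alpha=\ker\beta$, exactly as in the classical construction, and $\alpha_1$ the morphism induced by $\alpha$ on $D_1$ (using strictness of $\alpha$ to identify $D_1$ with $\coim\alpha$ as well, so that $\alpha_1$ is well defined). For the $E$-term I would form the composite $\partial=\beta\gamma\colon E\to E$ and observe, using the exact-couple relations $\im\gamma=\ker\alpha$ and $\im\alpha=\ker\beta$ together with strictness, that $\im\partial\subseteq\ker\partial$ in the appropriate sense, i.e.\ that $\partial$ ``is a differential.'' Concretely $\partial\partial=\beta\gamma\beta\gamma$ and $\gamma\beta$ factors through $\ker\gamma\to\ldots$; the relation $\im\beta=\ker\gamma$ forces $\gamma\beta=0$, so $\partial^2=0$ and both $H^-(E,\partial)$ and $H^+(E,\partial)$ make sense. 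Then $E_1^-:=H^-(E,\partial)=\Cok(\theta\colon\Im\partial\to\Ker\partial)$ and $E_1^+:=H^+(E,\partial)=\Ker(\tau\colon\Cok\partial\to\Coim\partial)$, and I would construct $\beta_1^{\pm},\gamma_1^{\pm}$ by the usual diagram chase: $\gamma$ restricted to $\Ker\partial$ lands in $\ker\alpha_1\subseteq D_1$ and kills $\Im\partial$, hence descends to $\gamma_1^-\colon E_1^-\to D_1$; dually $\beta$ lifts to $D_1$ and then, since it annihilates the relevant subobject, factors through the quotient, yielding $\beta_1^+\colon D_1\to E_1^+$; the ``crossed'' maps $\beta_1^-$ and $\gamma_1^+$ are obtained by precomposing resp.\ postcomposing with the canonical bimorphism $E_1^-\to E_1^+$ once it is available.

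For part~(i) I would verify the three image/kernel identities for each derived couple. The identity $\im\alpha_1=\ker\beta_1^{\pm}$ and $\im\gamma_1^{\pm}=\ker\alpha_1$ are the standard computations from the classical case, only one has to check that strictness of $\alpha,\beta,\gamma$ is enough to keep all the relevant morphisms strict (this is where the semiabelian hypotheses on $\bar\alpha$ being a bimorphism enters: a composite of a strict epi and a strict mono behaves as expected). The remaining identity $\im\beta_1^{\pm}=\ker\gamma_1^{\pm}$ is the one that genuinely uses the definition of $H^{\pm}$ as a (co)kernel of the natural map between image and kernel resp.\ cokernel and coimage of $\partial$; I expect this to reduce, after unwinding the definitions, to the elementary fact that $\Cok(\Im\partial\to\Ker\partial)$ is exactly the object that measures the failure of exactness of $\partial$ at $E$, and likewise on the right. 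Part~(ii) is then essentially the definition, since by construction $E_1^-$ and $E_1^+$ were taken to be those two cohomology objects.

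For part~(iii): the bimorphism $\omega\colon E_1^-\to E_1^+$ arises because $\Ker\partial\to E\to\Cok\partial$ factors on the one hand through $\Cok(\Im\partial\to\Ker\partial)=E_1^-$ (as it kills $\Im\partial$) and on the other hand through $\Ker(\Cok\partial\to\Coim\partial)=E_1^+$ (as the composite $\Ker\partial\to\Cok\partial$ becomes $0$ after further mapping to $\Coim\partial$, using $\ker\partial$ and $\cok\partial$ and the canonical $\Coim\to\Im$ bimorphism). This gives a canonical map $E_1^-\to E_1^+$ fitting into $(\ker\tau)\,\omega\,(\cok\theta)=(\cok\partial)(\ker\partial)$; uniqueness is immediate because $\cok\theta$ is epic and $\ker\tau$ is monic, so $\omega$ is pinned down by that equation. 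That $\omega$ is a bimorphism follows from the semiabelian structure: it is induced by the canonical $\Coim\to\Im$ comparison for a suitable morphism, namely essentially $\bar\partial$ restricted/corestricted appropriately, which is monic and epic by hypothesis. The equations $\omega\beta_1^-=\beta_1^+$ and $\gamma_1^-=\gamma_1^+\omega$ hold by construction of the crossed maps. The hard part is the last assertion, that $\omega$ is an \emph{isomorphism} when $\ker\partial$ or $\cok\partial$ is semistable: here I would argue that semistability lets one realize $E_1^-$ (resp.\ $E_1^+$) as the pushout (resp.\ pullback) of a kernel (resp.\ cokernel) diagram that is preserved, so that the comparison bimorphism $\omega$ actually becomes a strict morphism between objects that then must coincide --- i.e.\ one shows $\omega$ is simultaneously a strict mono and a strict epi, hence invertible. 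I expect this semistability step, disentangling how a pushout of a kernel along the relevant map stays a kernel and forces $\bar\omega$ to split, to be the main obstacle; everything else is a careful but routine transcription of Eckmann--Hilton into the strict semiabelian language.
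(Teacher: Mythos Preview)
Your proposal has a genuine gap in the construction of the maps. You build $\gamma_1^-\colon E_1^-\to D_1$ and $\beta_1^+\colon D_1\to E_1^+$ directly, and then say the ``crossed'' maps $\beta_1^-\colon D_1\to E_1^-$ and $\gamma_1^+\colon E_1^+\to D_1$ are obtained by composing with $\omega$. But $\omega\colon E_1^-\to E_1^+$ is only a bimorphism, not an isomorphism: it is neither a kernel nor a cokernel in general, so you cannot factor $\beta_1^+$ through $\omega$ to produce $\beta_1^-$, nor can you extend $\gamma_1^-$ along $\omega$ to produce $\gamma_1^+$. The equations $\omega\beta_1^-=\beta_1^+$ and $\gamma_1^-=\gamma_1^+\omega$ determine these maps uniquely \emph{if they exist}, but existence is exactly the issue. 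One can in fact construct $\beta_1^-$ directly (lift $\beta$ to $\Ker\partial$, pass to the cokernel $E_1^-$, and check it descends along $\sigma$), but you did not indicate this and it requires nontrivial use of strictness of $\gamma$.

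More broadly, your approach inverts the logic of the paper. The theorem statement says the diagrams arise from the Eckmann--Hilton construction of Section~\ref{EHC}, which builds $E_1^-$ as a pullback followed by a pushout (and $E_1^+$ dually); the maps $\beta_1^-,\gamma_1^-$ come for free from the universal properties, and part~(ii) is then a genuine identification $E_1^-\cong H^-(E,\partial)$ that must be \emph{proved} (the paper does this by computing $\sigma'=\cok(\beta'\gamma)$ and recognising $\beta'\gamma$ as $\theta'$). You instead \emph{define} $E_1^\pm$ as the cohomology objects, making (ii) tautological but leaving you without the pushout/pullback description that supplies the missing maps. Finally, your treatment of exactness in~(i) is too optimistic: in the semiabelian setting the identities $\im\alpha_1=\ker\beta_1^-$ and $\im\beta_1^-=\ker\gamma_1^-$ are not ``standard computations'' but require specific control over how image and coimage interact with kernels, cokernels and strict morphisms; the paper invokes Lemma~\ref{second} and several results from \cite{KW} (e.g.\ that $\beta'$ is strict, that $\im(\rho'\beta')=\rho'\im\beta'$, and the epimorphism $\widehat\sigma$ from \cite[Prop.~3.1(vii)]{KW}) to carry this out.
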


\begin{thm}\label{THM-2} Let $\mathcal{A}$ be semiabelian and consider the exact couple \eqref{ex-coup}. Assume that $\beta$ and $\gamma$ are strict and that $\ker\gamma$ and $\cok\beta$ are semistable. If the powers $\alpha^k$ are strict for $1\leqslant{}k\leqslant{}n$, then the derivation process of Theorem \ref{THM-1} can be performed $n$ times, i.e., the Eckmann-Hilton construction gives rise to a complete full binary tree of depth $n$ consisting of exact couples.
\end{thm}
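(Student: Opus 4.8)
The plan is to argue by induction on $n$. The case $n=1$ is exactly Theorem~\ref{THM-1}: its hypotheses ask for $\alpha$, $\beta$, $\gamma$ to be strict, which holds since $\alpha=\alpha^{1}$ is among the strict powers. For the inductive step one must check that each of the $2^{n-1}$ exact couples produced at depth $n-1$ again satisfies the hypothesis of Theorem~\ref{THM-1}, namely that its three structure morphisms are strict. A single round of the Eckmann-Hilton construction requires nothing beyond this strictness; but Theorem~\ref{THM-1} does \emph{not} by itself assert that the derived morphisms $\beta_{1}^{\pm}$ and $\gamma_{1}^{\pm}$ are strict, and supplying precisely that is the role of the semistability hypotheses on $\ker\gamma$ and $\cok\beta$. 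Accordingly I would run the induction on the strengthened assertion: \emph{at each node of depth $k<n$ the construction yields an exact couple $(D_{k},E_{k},\alpha_{k},\beta_{k},\gamma_{k})$ for which $\beta_{k}$ and $\gamma_{k}$ are strict, $\ker\gamma_{k}$ and $\cok\beta_{k}$ are semistable, and $\alpha_{k}^{m}$ is strict for $1\leqslant m\leqslant n-k$}. This is exactly the data that Theorem~\ref{THM-1} consumes at depth $k$ and must reproduce at depth $k+1$; for $k=0$ it is the hypothesis of the present theorem.

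I would first dispose of the $D$-component, where the binary tree does not in fact branch. The construction of Section~\ref{EHC} puts $D_{1}=\Im\alpha$ (canonically $\Coim\alpha$, since $\alpha$ is strict) and lets $\alpha_{1}$ be the morphism induced by $\alpha$, the very same choice whether one forms the left or the right derived couple; hence all $2^{k}$ nodes at depth $k$ carry one and the same pair $(D_{k},\alpha_{k})$. Iterating, one identifies $D_{k}$ with $\Im\alpha^{k}$: strictness of $\alpha^{j}$ for $j\leqslant k$ is exactly what makes the successive inclusions $\Im\alpha^{j}\hookrightarrow D$ and the connecting morphisms coherent, because it forces every $\overline{\alpha^{j}}$ to be invertible. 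Under this identification $\alpha_{k}$ is the morphism induced by $\alpha$ on $\Im\alpha^{k}$, and a direct computation with the factorisation $\alpha=(\im\alpha)\,\bar{\alpha}\,(\coim\alpha)$ and its iterates displays $\alpha_{k}^{m}$ as the morphism induced by $\alpha^{k+m}$; so strictness of $\alpha^{k+m}$, available exactly while $k+m\leqslant n$, yields strictness of $\alpha_{k}^{m}$. This is the only place where the full list $\alpha^{1},\dots,\alpha^{n}$ enters, not merely $\alpha$.

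The substance is the $E$-component, and this is where the semistability hypotheses are essential — in the form given them by the identities $\ker\gamma=\im\beta$ and $\cok\beta=\coim\gamma$ of Section~\ref{PRE}, which say that $\ker\gamma$ and $\cok\beta$ are precisely the mono and the epi arising when the strict morphisms $\beta$ and $\gamma$ are factored. Unravelling Section~\ref{EHC}, $\gamma_{1}^{-}\colon E_{1}^{-}\to D_{1}$ is the morphism induced by $\gamma$ on $E_{1}^{-}=\Cok\theta$: it is well defined because $\gamma$ kills $\Im\partial$ (which lies in $\im\beta=\ker\gamma$) and carries $\Ker\partial=\gamma^{-1}(\ker\beta)$ into $\ker\beta=D_{1}$, and its kernel comes out as $\ker\gamma$ modulo $\Im\partial$. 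Writing $\gamma$ as the composite of the cokernel $\coim\gamma$ and the kernel $\im\gamma$ — legitimate as $\gamma$ is strict — strictness of $\gamma_{1}^{-}$ reduces to two points: that the pullback of the semistable cokernel $\cok\beta=\coim\gamma$ along the inclusion $\ker\alpha\cap\ker\beta\hookrightarrow\ker\alpha=\Im\gamma$ is again a cokernel, and that $\ker\alpha\cap\ker\beta$ is a kernel in $D$ — the latter automatic, being $\ker\bigl((\alpha,\beta)\colon D\to D\oplus E\bigr)$. The three remaining structure morphisms of the two derived couples, and the semistability of $\ker\gamma_{1}^{\pm}$ and $\cok\beta_{1}^{\pm}$ needed for the following round, are obtained by similar computations, each reducing a strictness or semistability claim about a subquotient of $E$ to a pushout (resp.\ pullback) of $\ker\gamma$ (resp.\ $\cok\beta$) that is controlled by the hypotheses together with the stability properties of semistable kernels and cokernels under pushout, pullback and composition.

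The main obstacle is exactly this web of reductions. In a merely semiabelian category the subobject and quotient calculus of the abelian case is not at one's disposal — intersections, images of restrictions and iterated kernels need not interact as expected — so each identification of a relevant kernel (resp.\ cokernel) as a pushout (resp.\ pullback) along the correct morphism has to be carried out by hand, and in particular one must verify that the intermediate subobject inclusions (such as $\ker\gamma\hookrightarrow\Ker\partial$) themselves carry the semistability that makes these constructions behave. Granting all of this, the induction runs to its end: applying Theorem~\ref{THM-1} at each of the $2^{k}$ nodes of depth $k$, for $k=0,1,\dots,n-1$, produces $2^{k+1}$ exact couples at depth $k+1$, and after $n$ steps one obtains the complete full binary tree of depth $n$ all of whose $2^{n}$ leaves are exact couples.
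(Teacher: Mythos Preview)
Your overall strategy coincides with the paper's: argue by induction, showing that each derived couple again satisfies the full package of hypotheses (strictness of $\beta_1^{\pm},\gamma_1^{\pm}$; semistability of $\ker\gamma_1^{\pm}$ and $\cok\beta_1^{\pm}$; strictness of the relevant powers of $\alpha_1$). Your treatment of the $D$-component is in fact more explicit than the paper's: the paper only writes out $\alpha^2=\rho\alpha_1\sigma$ to deduce that $\alpha_1$ is strict, leaving the higher powers to the phrase ``follows by iteration'', whereas you spell out why $\alpha_k^{m}$ inherits strictness from $\alpha^{k+m}$.

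Where the two diverge is the $E$-component. The paper isolates a clean technical tool, Lemma~\ref{1.5}: the pushout of a strict morphism with semistable image is again strict with semistable image (and dually). With this in hand the argument for $\beta_1^{-}$ runs entirely inside the Eckmann--Hilton diagrams~\eqref{eck-hil2}: one first shows $\beta'$ is strict with $\im\beta'$ semistable (using $\rho'\im\beta'=\im\beta$ and \cite[Prop.~2.4]{SW}), then applies Lemma~\ref{1.5} to the pushout square producing $\beta_1^{-}$. The dual of the lemma handles $\gamma'$, and then $\gamma_1^{-}$ is obtained from $\gamma_1^{-}\sigma'=\gamma'$ with $\sigma'$ a cokernel. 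Your approach instead tries to read off the factorisation of $\gamma_1^{-}$ directly in terms of intersections like $\ker\alpha\cap\ker\beta$ and pulled-back cokernels. That description is correct heuristically, but---as you yourself note---the subobject calculus needed to make it rigorous in a merely semiabelian category is exactly the delicate part, and you do not carry it out. The paper's Lemma~\ref{1.5} is precisely the device that packages those verifications once and for all and lets the proof stay within pushout/pullback manipulations rather than elementwise-style reasoning.
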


\section{Eckmann-Hilton Construction}\label{EHC}

Let $\mathcal{A}$ be preabelian. We fix the exact couple \eqref{ex-coup} and assume that the morphism $\alpha$ is strict, i.e., it has a decomposition $\alpha=\rho\sigma$ with a kernel $\rho$ and a cokernel $\sigma$. Taking the exactness into account it follows that $\rho=\ker\beta$ and $\sigma=\cok\gamma$ and we can consider the diagram
\begin{equation}\label{eck-hil1}
\begin{tikzcd}
D_1  & &D_1 \arrow[tail]{d}{\rho}\\
D \arrow[two heads]{u}{\sigma} \arrow[]{r}{\beta} & E \arrow[]{r}{\gamma}&  D
\end{tikzcd}
\end{equation}
with $D_1=\Im\alpha=\Coim\alpha=\Ker\beta=\Cok\gamma$. From \eqref{eck-hil1} we obtain the following two diagrams
\begin{equation}\label{eck-hil2}
\begin{tikzcd}[column sep=1.1ex, row sep=1.1ex]
D_1\arrow[]{rr}{\beta_1^-}  && E_1^-\arrow[]{rr}{\gamma_1^-} &&D_1 \arrow[equal]{dd}\\
&\text{\footnotesize\rm PO}&&\text{\footnotesize\rm\phantom{PB}}&\\
D\arrow[two heads]{uu}{\sigma}\arrow[]{rr}{\beta'}   && E_{\rho}\arrow[two heads]{uu}[swap]{\sigma'}\arrow[]{rr}{\gamma'}\arrow[tail]{dd}{\rho'} &&D_1 \arrow[tail]{dd}{\rho}\\
&\text{\footnotesize\rm\phantom{PB}}&&\text{\footnotesize\rm PB}&\\
D \arrow[equal]{uu} \arrow[]{rr}{\beta} && E \arrow[]{rr}{\gamma}&&  D
\end{tikzcd}\hspace{30pt}
\begin{tikzcd}[column sep=1.1ex, row sep=1.1ex]
D_1\arrow[]{rr}{\beta_1^+} && E_1^+\arrow[]{rr}{\gamma_1^+}\arrow[tail]{dd}{\rho''}&& D_1 \arrow[tail]{dd}{\rho}\\
&\text{\footnotesize\rm\phantom{PB}}&&\text{\footnotesize\rm PB}&\\
D_1\arrow[equal]{uu}\arrow[]{rr}{\beta''} && E^{\sigma}\arrow[]{rr}{\gamma''} &&D\arrow[equal]{dd} \\
&\text{\footnotesize\rm PO}&&\text{\footnotesize\rm\phantom{PB}}&\\
D \arrow[two heads]{uu}{\sigma} \arrow[]{rr}{\beta} && E \arrow[]{rr}{\gamma}\arrow[two heads]{uu}[swap]{\sigma''}&&  D
\end{tikzcd}
\end{equation}
by the classical construction of Eckmann, Hilton \cite{EH}. We start with the left diagram. We first form the pullback to obtain $\gamma'$ and $\rho'$. Then we apply its universal property and get $\beta'$ with $\gamma'\beta'=0$ and $\rho'\beta'=\beta$. Next, we form the pushout to obtain $\sigma'$ and $\beta_1^-$. By the universal property of the pushout we finally get $\gamma_1^-$ with $\gamma_1^-\beta_1^-=0$ and $\gamma_1^-\sigma'=\gamma'$. By \cite[Lem.~2.1(iii)]{KW}, $\rho'$ is a kernel and $\sigma'$ is a cokernel. The right diagram is constructed dually. The diagrams in \eqref{der-coup} are obtained by setting $\alpha_1=\sigma\rho$.
\smallskip
\\Using Lemma's \ref{first} and \ref{second} below, an inspection of \cite[proof of Thm.~1]{K} provides that there is a canonical bi\-mor\-phism $\omega\colon E_1^-\rightarrow E_1^+$ which is uniquely determined by the equations $\omega\beta_1^-=\beta_1^+$, $\gamma_1^-=\gamma_1^+\omega$ and $\rho''\omega\sigma'=\sigma''\rho'$.

\section{Proofs}\label{PROOFS}

In \cite{K}, Lemma's \ref{first} and \ref{second} were proved for a quasiabelian category; the results of \cite{KW} allow for the generalizations given below. The category $\mathcal{A}$ is left [right] semiabelian, if $\bar{\alpha}$ is monic [epic] for every morphism $\alpha$.

\begin{lem}\label{first} Let $\mathcal{A}$ be left semiabelian and let $\alpha$, $\beta$ and $\rho$ be morphisms. If $\alpha=\cok\beta$ and $\im\beta=\ker(\rho\alpha)$ hold then $\rho$ is a monomorphism.
\end{lem}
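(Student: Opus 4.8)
The plan is to exploit the characterization of $\ker$ in terms of the universal property and to reduce the statement to the semiabelian axiom. First I would set $\alpha=\cok\beta$ and factor $\beta=(\im\beta)\bar\beta(\coim\beta)$, where $\coim\beta=\cok\ker\beta$ is an epimorphism, $\bar\beta$ is monic since $\mathcal{A}$ is left semiabelian, and $\im\beta=\ker\cok\beta=\ker\alpha$. Thus $\ker\alpha=\im\beta$, and the hypothesis $\im\beta=\ker(\rho\alpha)$ says precisely that $\ker\alpha=\ker(\rho\alpha)$. The goal is then to show that the equality $\ker\alpha=\ker(\rho\alpha)$, for a morphism $\rho$ composable with $\alpha$ on the left, forces $\rho$ to be monic — but of course this is false in general (e.g. $\alpha=0$), so the argument must use more: namely that $\alpha$ is a cokernel, hence an epimorphism (in fact a strict epimorphism), and in particular $\alpha=\cok\ker\alpha$.

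Here is the core of the argument I would carry out. Suppose $\mu\colon X\rightarrow Y$ is a morphism with $\rho\mu=0$, where $Y$ is the codomain of $\alpha$ and the domain of $\rho$; I want $\mu=0$. Since $\alpha=\cok\beta$ is an epimorphism, it is even a strict epimorphism (it is a cokernel), so $\alpha=\cok(\ker\alpha)$. The idea is to lift $\mu$ through $\alpha$: form the pullback of $\mu$ along $\alpha$, or more directly, I would like to factor $\mu$ as $\alpha$ composed with something. A cleaner route: consider the morphism $\rho\alpha$. We have $\ker(\rho\alpha)=\ker\alpha$; since $\alpha$ is epic and $\alpha=\cok(\ker\alpha)=\cok(\ker(\rho\alpha))$, and the canonical factorization gives $\rho\alpha=(\im(\rho\alpha))\overline{\rho\alpha}(\coim(\rho\alpha))$ with $\coim(\rho\alpha)=\cok\ker(\rho\alpha)=\cok\ker\alpha=\coim\alpha$. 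Thus there is an induced comparison morphism from the coimage–image factorization of $\alpha$ to that of $\rho\alpha$; concretely, because $\alpha$ is a cokernel we have $\alpha=\coim\alpha$ up to iso, so $\rho\alpha=(\im(\rho\alpha))\overline{\rho\alpha}\,\alpha$ (identifying $\coim(\rho\alpha)$ with $\alpha$), whence $\rho=(\im(\rho\alpha))\,\overline{\rho\alpha}$ because $\alpha$ is epic. Now $\im(\rho\alpha)$ is a kernel, hence monic, and $\overline{\rho\alpha}$ is monic by the left semiabelian hypothesis; a composite of two monics is monic, so $\rho$ is monic.

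I would present this as follows. Write $\alpha=\cok\beta$, so $\alpha$ is a cokernel; hence $\alpha$ is a strict epimorphism and the canonical map $\bar\alpha\colon\Coim\alpha\rightarrow\Im\alpha$ is an isomorphism, i.e. we may take $\Coim\alpha=\alpha$ (the quotient $\cok\ker\alpha$) with $\im\alpha\circ\bar\alpha\circ\coim\alpha=\alpha$ and $\coim\alpha$ identified with $\alpha$ itself, $\im\alpha$ an isomorphism. The hypothesis gives $\ker\alpha=\im\beta=\ker(\rho\alpha)$ (using $\im\beta=\ker\cok\beta=\ker\alpha$). Therefore $\coim(\rho\alpha)=\cok\ker(\rho\alpha)=\cok\ker\alpha=\coim\alpha$, and so, inserting the canonical factorization $\rho\alpha=\im(\rho\alpha)\circ\overline{\rho\alpha}\circ\coim(\rho\alpha)$ and $\coim(\rho\alpha)=\coim\alpha=\alpha$ (up to the isomorphism $\bar\alpha$), we obtain $\rho\alpha=\bigl(\im(\rho\alpha)\circ\overline{\rho\alpha}\circ(\text{iso})\bigr)\circ\alpha$. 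Cancelling the epimorphism $\alpha$ on the right yields $\rho=\im(\rho\alpha)\circ\overline{\rho\alpha}\circ(\text{iso})$. Since $\im(\rho\alpha)$ is a kernel (hence monic), $\overline{\rho\alpha}$ is monic (left semiabelian), and isomorphisms are monic, $\rho$ is a composite of monomorphisms and therefore monic.

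The main obstacle I anticipate is bookkeeping the identifications: making precise the sense in which $\coim\alpha$ "equals" $\alpha$ and threading the various canonical isomorphisms ($\bar\alpha$, the uniqueness of (co)images up to unique iso, the identification $\ker\alpha=\ker(\rho\alpha)$) through the factorization of $\rho\alpha$ so that the cancellation of $\alpha$ is legitimate. One has to be a little careful that $\coim$ and $\im$ of the two morphisms are compared via the correct induced maps; the honest way is to use the universal property of $\cok\ker(-)$ directly rather than literal equalities. Once that is set up correctly, the conclusion is immediate from left semiabelianness. I would also remark that the hypothesis that $\alpha$ is a cokernel is essential — without epimorphism of $\alpha$ the cancellation step fails — which is exactly why the lemma is stated with $\alpha=\cok\beta$ rather than with an arbitrary $\alpha$.
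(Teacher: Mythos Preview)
Your argument is correct and takes a genuinely different route from the paper. The paper proceeds by a pullback/element-style argument: given $x$ with $\rho x=0$, it pulls $x$ back along $\cok\beta$ to obtain $y$, $z$ with $(\cok\beta)z=xy$; then $\rho(\cok\beta)z=0$ forces $z$ to factor through $\im\beta=\ker(\rho\alpha)$, whence $xy=0$, and left semiabelianness enters via the fact that the pullback $y$ of the cokernel $\cok\beta$ is an epimorphism (this is \cite[Prop.~3.2(iv$'$)]{KW}), so $x=0$. Your approach instead uses the canonical factorization directly: from $\ker(\rho\alpha)=\im\beta=\ker\alpha$ and the fact that a cokernel is the cokernel of its kernel you get $\coim(\rho\alpha)=\alpha$, hence $\rho\alpha=(\im(\rho\alpha))\,\overline{\rho\alpha}\,\alpha$, and cancelling the epimorphism $\alpha$ yields $\rho=(\im(\rho\alpha))\,\overline{\rho\alpha}$, a composite of monics by the defining property of left semiabelian. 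Both proofs invoke left semiabelianness exactly once, but through different (equivalent) characterizations: the paper uses ``pullbacks of cokernels are epic'', you use ``$\bar{(-)}$ is always monic''. Your version is shorter and avoids the auxiliary pullback diagram; the paper's version has the virtue of making the role of the hypothesis $\im\beta=\ker(\rho\alpha)$ visible as a factorization step rather than hiding it inside an identification of coimages. The bookkeeping worry you raise is harmless: choosing $\alpha$ itself as the representative of $\cok\ker(\rho\alpha)$ is legitimate since cokernels are determined up to unique isomorphism.
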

\begin{proof} Suppose that $\rho x=0$. Consider the pullback
\begin{equation}
\begin{tikzcd}[column sep=1.1ex, row sep=1.1ex]
\cdot \arrow[]{rr}{y}\arrow[]{dd}[swap]{z}&&\cdot \arrow[]{dd}{x}\\
&\text{\footnotesize\rm PB}&\\
\cdot\arrow[]{rr}[swap]{\cok\beta}&&\cdot
\end{tikzcd}
\end{equation}
and compute $\rho(\cok\beta)z=\rho xy=0$. Since $\im\beta=\ker(\rho\cok\beta)$, it follows that there exists a unique morphism $u$ with $z=(\im\beta)u$. Hence $xy=(\cok\beta)z=0$. By \cite[Prop.~3.2(iv$'$)]{KW}, $y$ is an epimorphism. Therefore, $x=0$ and thus $\rho$ is a monomorphism.
\end{proof}

\begin{lem}\label{second} Let $\mathcal{A}$ be right semiabelian and let $\alpha$, $\beta$ and $\rho$ be morphisms. If $\coim\alpha=\cok(\rho\beta)$ holds and $\rho$ is a kernel then $\im\beta=\ker(\alpha\rho)$ follows.
\end{lem}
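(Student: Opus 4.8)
The plan is to dualize Lemma~\ref{first}. Indeed, Lemma~\ref{second} is exactly the statement obtained from Lemma~\ref{first} by passing to the opposite category $\mathcal{A}^{\mathrm{op}}$: a left semiabelian category becomes right semiabelian under $(-)^{\mathrm{op}}$, kernels become cokernels, images become coimages, monomorphisms become epimorphisms, and the hypothesis $\alpha=\cok\beta$ together with $\im\beta=\ker(\rho\alpha)$ turns into $\coim\alpha=\cok(\rho\beta)$ together with the conclusion that $\rho$ is an epimorphism in $\mathcal{A}^{\mathrm{op}}$, i.e.\ a monomorphism in $\mathcal{A}$. However, the statement actually wanted is different: here $\rho$ is \emph{assumed} to be a kernel and the conclusion is the equation $\im\beta=\ker(\alpha\rho)$. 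So a direct appeal to the dual of Lemma~\ref{first} is not quite enough, and I would instead give a hands-on argument mirroring its proof but run through a pushout rather than a pullback.

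Concretely, I would argue as follows. First, $(\alpha\rho)\beta=\alpha(\rho\beta)=0$ because $\coim\alpha=\cok(\rho\beta)$ forces $\alpha(\rho\beta)=0$ (the canonical map $\coim\alpha\to\alpha$'s image factors $\alpha$ through $\cok(\rho\beta)$). Hence $\im\beta\leqslant\ker(\alpha\rho)$, and it remains to see this inclusion is an equality, i.e.\ that the monomorphism $\ker(\alpha\rho)$ factors through $\im\beta$. Take any $x$ with $(\alpha\rho)x=0$; I must produce a factorization $\rho x=(\text{something})\cdot(\rho\,\im\beta$-part$)$, using that $\rho$ is a kernel. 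Dualizing the pullback in Lemma~\ref{first}, I form the pushout of $x$ along $\coim\alpha=\cok(\rho\beta)$:
\begin{equation*}
\begin{tikzcd}[column sep=1.1ex, row sep=1.1ex]
\cdot\arrow[]{rr}{\cok(\rho\beta)}\arrow[]{dd}[swap]{x}&&\cdot\arrow[]{dd}{z}\\
&\text{\footnotesize\rm PO}&\\
\cdot\arrow[]{rr}[swap]{y}&&\cdot
\end{tikzcd}
\end{equation*}
so that $y x=z\,\cok(\rho\beta)$ and, by \cite[Prop.~3.2(iv$'$)]{KW} applied in $\mathcal{A}^{\mathrm{op}}$ (i.e.\ the right semiabelian, pushout-of-a-cokernel-along-anything statement), $y$ is a monomorphism. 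Then from $\alpha\rho x=0$ and $\coim\alpha=\cok(\rho\beta)$ one gets that $\rho x$ factors through $\ker(\cok(\rho\beta))=\im(\rho\beta)$; since $\rho$ is a kernel, hence a strict mono, one can cancel $\rho$ and deduce that $x$ factors through $\ker(\cok(\rho\beta))$ composed appropriately, ultimately yielding a factorization of $x$ through $\im\beta$. This gives the reverse inclusion $\ker(\alpha\rho)\leqslant\im\beta$ and completes the proof.

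The main obstacle I anticipate is bookkeeping the use of the hypothesis ``$\rho$ is a kernel'': in a merely right semiabelian category one cannot freely cancel monomorphisms, but a kernel is automatically strict, so $\rho x=0$-type conclusions and the factorization $\rho x=(\text{mono})\cdot t$ can be pulled back through $\rho$ to a factorization of $x$ itself — this is exactly where the kernel hypothesis (not just monic) is needed, paralleling where Lemma~\ref{first} used $\alpha=\cok\beta$ to get the epi $y$. I would lift the relevant half of \cite[Prop.~3.2(iv$'$)]{KW} and the kernel-is-strict fact from \cite{KW} (cf.\ the discussion preceding Lemma~\ref{first}), and otherwise the argument is the term-by-term $\mathcal{A}^{\mathrm{op}}$-translation of the proof of Lemma~\ref{first}, so no genuinely new idea is required beyond getting these dualities aligned correctly.
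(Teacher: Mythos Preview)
Your proposal contains the correct core idea, but it is wrapped in unnecessary and in places ill-formed scaffolding. First, Lemma~\ref{second} is genuinely \emph{not} the formal dual of Lemma~\ref{first} (the dual of Lemma~\ref{first} would have $\alpha=\ker\beta$ as a hypothesis and conclude that $\rho$ is epic), so the opening discussion is a red herring---as you yourself half-acknowledge. More seriously, the pushout you propose does not typecheck: if $\beta\colon A\to B$, $\rho\colon B\to C$, $\alpha\colon C\to D$, then a test morphism $x$ with $\alpha\rho x=0$ has codomain $B$, whereas $\cok(\rho\beta)$ has domain $C$; they share neither source nor target, so there is no pushout of the shape you drew, and nothing analogous to the morphism $y$ in Lemma~\ref{first} ever enters.

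What actually carries your argument is the sentence you wrote \emph{after} the pushout: from $\alpha(\rho x)=0$ one factors $\rho x$ through $\ker\alpha=\ker\coim\alpha=\ker\cok(\rho\beta)=\im(\rho\beta)$, and then cancels $\rho$. This is exactly the paper's proof. The only substantive step you elide with ``composed appropriately, ultimately yielding'' is the identity $\im(\rho\beta)=\rho\,\im\beta$, and that identity is precisely where both hypotheses are used (right semiabelianness and $\rho$ a kernel, via \cite[Cor.~2.3(i) and Prop.~3.1]{KW}). Once it is in hand, $\rho x=\rho(\im\beta)v$ for a unique $v$, and monicity of $\rho$ (no strictness needed for cancellation) gives $x=(\im\beta)v$. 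So: drop the pushout entirely, state $\im(\rho\beta)=\rho\,\im\beta$ explicitly with its justification, and you have the paper's short direct proof.
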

\begin{proof} We check that $\im\beta$ is a kernel of $\alpha\rho$.  We have $\ker\alpha=\ker\coim\alpha=\ker\cok(\rho\beta)=\im(\rho\beta)=\rho\im\beta$, where we used \cite[Cor.~2.3(i) and Prop.~3.1]{KW} for the last equality. It follows that $\alpha\rho\im\beta=0$.
\smallskip
\\Suppose now $\alpha\rho{}x=0$. Since $\ker\alpha=\rho\im\beta$ holds there exists a unique $v$ such that $\rho{}x=\rho(\im\beta)v$ holds. Since $\rho$ is monic, $x=(\im\beta)v$ follows and $v$ is uniquely determined by the last equation.
\end{proof}

\begin{lem}\label{1.5} Let $\mathcal{A}$ be semiabelian and let
\begin{equation}
\begin{tikzcd}[column sep=1.1ex, row sep=1.1ex]
\cdot \arrow[]{rr}{x}\arrow[]{dd}[swap]{s}&&\cdot \arrow[]{dd}{t}\\
&\text{\footnotesize\rm PO}&\\
\cdot\arrow[]{rr}[swap]{y}&&\cdot
\end{tikzcd}
\end{equation}
be a pushout diagram. Assume that $x$ is strict and $\im x$ is a semistable kernel. Then $y$ is strict and $\im y$ is a semistable kernel.
\end{lem}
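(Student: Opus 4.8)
The plan is to split the given pushout along the strict factorization of $x$ and to treat the two factors separately. Since $x$ is strict, it decomposes as $x=me$, where $m=\im x$ is a kernel and $e=\coim x$ is a cokernel, in the sense recalled in Section~\ref{EHC}; by hypothesis $m$ is moreover a \emph{semistable} kernel. As $\mathcal{A}$ is preabelian it has all pushouts, so I first form the pushout of $e$ along $s$, producing a morphism $e'$ together with an induced morphism $s_1$ satisfying $e's=s_1e$, and then form the pushout of $m$ along $s_1$, producing a morphism $m'$. By the pasting lemma for pushouts, the outer rectangle of these two squares is a pushout of $x=me$ along $s$; via the canonical isomorphism identifying it with the pushout in the statement I then obtain $y=m'e'$ (absorbing that isomorphism into $m'$, which affects none of the properties claimed for it).

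It then suffices to record two stability facts. First, the pushout of a cokernel is again a cokernel: this is exactly the argument used for $\sigma'$ in the diagram \eqref{eck-hil2} via \cite[Lem.~2.1(iii)]{KW}, and one checks directly that $e'=\cok(s(\ker e))$; thus $e'$ is a cokernel. Second, the pushout $m'$ of $m$ along $s_1$ is a kernel because $m$ is semistable, and $m'$ is \emph{itself} semistable: by a further application of the pasting lemma, every pushout of $m'$ is a pushout of $m$, hence a kernel. Consequently $m'$ is a semistable kernel.

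Finally the conclusion reads off. The equation $y=m'e'$ exhibits $y$ as a cokernel followed by a kernel, which by the characterization of strict morphisms in Section~\ref{EHC} means precisely that $y$ is strict. Moreover $\cok y=\cok(m'e')=\cok m'$, since $e'$ is an epimorphism, and $m'=\ker(\cok m')$, since $m'$ is a kernel; hence $m'=\ker(\cok y)=\im y$ up to isomorphism, so $\im y$ is the semistable kernel $m'$. I expect the only points requiring genuine care to be the bookkeeping in the two pasting steps — correctly identifying $s_1$ and checking that the two-stage pushout really reproduces the square in the statement — and the short argument that semistability is inherited by $m'$ rather than just the kernel property; everything else is formal and already contained in the toolkit of \cite{KW}.
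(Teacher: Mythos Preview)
Your proof is correct and follows essentially the same route as the paper's: both decompose $x=(\im x)(\coim x)$, push out the two factors separately, and read off that $y$ factors as a cokernel followed by a semistable kernel. The only cosmetic difference is that the paper starts from the given outer pushout, forms the left pushout, factors through it to get $y_1$, and invokes Kelly's pasting lemma \cite[Lem.~5.1]{Kelly} to conclude the right square is a pushout, whereas you build both inner pushouts first and paste them to recover the outer square; the content is the same.
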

\begin{proof} We construct the diagram
\begin{equation}
\begin{tikzcd}[column sep=1.1ex, row sep=1.1ex]
\cdot\arrow[]{rr}{\coim x}\arrow[]{dd}[swap]{s} &&\cdot\arrow[]{dd}{}\arrow[]{rr}{\im x}&&\cdot\arrow[]{dd}{t}\\
&\text{\footnotesize\rm PO}&&\text{\footnotesize\rm\phantom{PO}}&\\
\cdot\arrow[]{rr}[swap]{y_0}&&\cdot\arrow[]{rr}[swap]{y_1}&&\cdot
\end{tikzcd}
\end{equation}
by decomposing  $x=(\im x)\coim x$, forming the pushout of $\coim x$ along $s$ to get $y_0$ and using its universal property to get $y_1$ with $y=y_1y_0$. By Kelly \cite[Lem.~5.1]{Kelly} the right-hand square is a pushout and thus $y_1$ is a semistable kernel by Sieg, Wegner \cite[Rem.~2.3(a)]{SW}. By \cite[Cor.~2.3(i) and Prop.~3.2]{KW} and since $y_0$ is a cokernel by \cite[Lem.~2.1(iii)]{KW} it follows that $\im y= y_1\im (y_0) = y_1$. Finally, $y=y_1y_0$ is strict by Schneiders \cite[Rem.~1.1.2(c)]{S}.
\end{proof}

Now we prove the results stated in Section \ref{RESULTS}.

\begin{proof}\textit{(of Theorem \ref{THM-1})} (i) We show $\im\beta_1^-=\ker\gamma_1^-$, $\im\alpha_1=\ker\beta_1^-$ and $\im\gamma_1^-=\ker\alpha_1$.
\smallskip
\\Lemma \ref{second}, $\cok\beta=\coim\gamma$ and $\beta=\rho'\beta'$ imply that $\im\beta'=\ker(\gamma\rho')$ holds. Hence, $\im\beta'=\ker(\rho\gamma')=\ker\gamma'$ and therefore $\coim\gamma'=\cok\im\beta'=\cok\beta'$ holds. By \cite[Lemma 2.1(iii)]{KW} we have $\cok\beta'=(\cok\beta_1^-)\sigma'$. In addition $\gamma'=\gamma_1^-\sigma'$ holds and $\sigma'$ is a cokernel. Therefore, $\coim\gamma'=(\coim\gamma_1^-)\sigma'$ holds by \cite[Cor.~2.3(ii) and Prop.~3.2]{KW}. We compute $(\cok\beta_1^-)\sigma'=\cok\beta'=\coim\gamma'= (\coim\gamma_1^-)\sigma'$ which yields $\cok\beta_1^-=\coim\gamma_1^-$ since $\sigma'$ is epic. The latter is equivalent to $\im\beta_1^-=\ker\gamma_1^-$.
\smallskip
\\We have $\beta=\rho'\beta'$, $\beta$ is strict and $\rho'$ is a kernel. Thus, $\rho'\beta'=\im(\rho'\beta')\bar{\beta}\coim(\rho'\beta')=\rho'(\im\beta')\bar{\beta}\coim(\rho'\beta')$, where the last equality follows from \cite[Cor.~2.3(i) and Prop.~3.1]{KW} and $\bar{\beta}$ is an isomorphism. We get $\beta'=(\im\beta')\bar{\beta}\coim(\rho'\beta')$ and thus $\beta'$ is strict by \cite[Rem.~1.1.2(c)]{S}. Now we can use \cite[Prop.~3.1(vii)]{KW} to obtain that the morphism $\widehat\sigma$ defined by the equality $\sigma(\ker\beta')=(\ker\beta_1^-)\widehat\sigma$ is an epimorphism. We have $\ker\beta'=\ker(\rho'\beta')=\ker\beta=\rho$ and hence $\alpha_1=\sigma\rho=(\ker\beta_1^-)\widehat\sigma$. Therefore, $\im\alpha_1=\ker\beta_1^-$ by \cite[Cor.~2.3(i) and Prop.~3.1]{KW}.
\smallskip
\\By dualizing the latter paragraph we obtain $\cok\gamma_1^+=\coim\alpha_1$ and thus $\cok\gamma_1^-=\cok(\gamma_1^+\omega)=\cok\gamma_1^+$ with $\omega$ as in the last paragraph of Section \ref{EHC}. Therefore, $\cok\gamma_1^-=\coim\alpha_1$ which is equivalent to $\im\gamma_1^-=\ker\alpha_1$.
\smallskip
\\The exactness of the second couple can be proved dually.
\medskip
\\(ii) Observe that $\cok\theta=\cok(\theta'\colon E\rightarrow\Ker\partial)$ and $\ker\tau=\ker(\tau'\colon \Cok\partial\rightarrow{}E)$ hold, where $\theta'$ and $\tau'$  are the natural maps which are uniquely determined by $\partial=(\ker\partial)\theta'$ and $\partial=\tau'\cok\partial$, respectively.
\smallskip
\\In the Eckmann-Hilton construction we obtained $\rho=\ker\beta$ and $\sigma=\cok\gamma$. By \cite[Lem.~2.1(iii)]{KW} we get $\rho'=\ker(\beta\gamma)$ and $\sigma'=\cok(\beta'\gamma)$. From the first equation it follows that $\partial=\beta\gamma=\rho'\beta'\gamma=\ker(\beta\gamma)\beta'\gamma=(\ker\partial)\beta'\gamma$ holds and thus $\theta'=\beta'\gamma$, i.e., $H^-(E,\partial)=\Cok(\beta'\gamma)$. From the second equation we obtain $E_1^-=\Cok(\beta'\gamma)$.
\smallskip
\\The equality $E_1^+=H^+(E,\partial)$ can be proved dually.
\medskip
\\(iii) With $\theta'$ and $\tau'$ as in the proof of (ii) it follows from \cite[Section 3]{K9} that there exists $m\colon H^-(E,\partial)\rightarrow H^+(E,\partial)$ such that $(\ker\tau')m(\cok\theta')=(\cok\partial)(\ker\partial)$ holds and that $m$ is unique with this property.
\smallskip
\\Also as in the proof of (ii) we have $\rho=\ker\beta$ and $\sigma=\cok\gamma$ and obtain $\rho'=\ker(\beta\gamma)$, $\sigma''=\cok(\beta\gamma)$, $\rho''=\ker(\beta\gamma'')$ and $\sigma'=\cok(\beta'\gamma)$ by \cite[Lem.~2.1(iii)]{KW}. We compute $\partial=\beta\gamma''\sigma''=\beta\gamma''\cok\partial$ and $\partial=\rho'\beta'\gamma=(\ker\partial)\beta'\gamma$. Therefore, $\tau'=\beta\gamma''$ and $\theta'=\beta'\gamma$ hold and thus we obtain the equation $\rho''m\sigma'=\ker(\beta\gamma'')m\cok(\beta'\gamma)=(\ker\tau')m(\cok\theta')=(\cok\partial)(\ker\partial)=(\cok\beta\gamma)(\ker\beta\gamma)=\sigma''\rho'$.
\smallskip
\\To finish the proof we observe that the last paragraph in Section \ref{EHC} implies that there exists $\omega$ such that $\rho''\omega\sigma'=\sigma''\rho'$. Since $\rho''$ is monic and $\sigma'$ is epic it follows that $\omega=m$ and thus we get all remaining properties of $\omega$ by Section \ref{PROOFS} and \cite[Lem.~7]{K9}.
\end{proof}

\begin{proof}\textit{(of Theorem \ref{THM-2})} We show that the morphisms in the two derived couples of Theorem \ref{THM-1} are strict and that $\ker\gamma_1^{\pm}$ and $\cok\beta_1^{\pm}$ are semistable. The result then follows by iteration.
\smallskip
\\We claim that $\beta_1^-$ and $\gamma_1^-$ are strict and that $\ker\gamma_1^-=\im\beta_1^-$ and $\cok\beta_1^-=\coim\gamma_1^-$ are semistable. Remember that by our assumptions $\im\beta=\ker\gamma$ and $\coim\gamma=\cok\beta$ are semistable. In the third paragraph of the proof of Theorem \ref{THM-1}(i) we deduced already that $\beta'$ is strict. We also established already the equality $\rho'\im\beta'=\im\beta$ in the latter proof. By \cite[Prop.~2.4(d)]{SW} we get that $\im\beta'$ is a semistable kernel since this is true for $\im\beta$. By Lemma \ref{1.5} we get that $\beta_1^-$ is strict and that $\im\beta_1^-$ is semistable. On the other hand, the dual version of Lemma \ref{1.5} yields that $\gamma'$ is strict and $\coim\gamma'$ is semistable, since $\gamma$ is strict and $\coim\gamma$ is semistable. We know so far that $\sigma'$ is a cokernel and that $\gamma_1^-\sigma'=\gamma'$ is strict. Thus, $\gamma_1^-\sigma'=\im(\gamma_1^-\sigma')\overline{\gamma'}\coim(\gamma_1^-\sigma')=\im(\gamma_1^-\sigma')
 \overline{\gamma'}(\coim\gamma_1^-)\sigma'$, where the last equality follows from \cite[Cor.~2.3(ii) and Prop.~3.2]{KW} and $\overline{\gamma'}$ is an isomorphism. We get that $\gamma_1^-=\im(\gamma_1^-\sigma')\overline{\gamma'}\coim\gamma_1^-$ is strict by \cite[Rem.~1.1.2(c)]{S}. We proved already that $\coim\gamma'=(\coim\gamma_1^-)\sigma'$ holds and that $\coim\gamma'$ is semistable. The latter implies that $\coim\gamma_1^-$ is semistable by \cite[Prop.~2.4(c)]{SW} and the claim is established.
\smallskip
\\By our assumptions $\alpha^2=\rho\sigma\rho\sigma=\rho\alpha_1\sigma$ is strict, i.e., $\overline{\alpha^2}$ is an isomorphism. We compute $\rho\alpha_1\sigma=\im(\rho\alpha_1\sigma)\overline{\alpha^2}\coim(\rho\alpha_1\sigma)=\rho\im(\alpha_1\sigma)\overline{\alpha^2}\coim(\rho\alpha_1)\sigma$, where we  use \cite[Cor.~2.3 and Thm.~3.3]{KW} and the fact that $\rho$ is a kernel and $\sigma$ is a cokernel, see Section \ref{EHC}. It follows that $\alpha_1=\im(\alpha_1\sigma)\overline{\alpha^2}\coim(\rho\alpha_1)$ is strict by \cite[Rem.~1.1.2(c)]{S}.
\smallskip
\\The statements for the second couple can be proved dually.
\end{proof}

\footnotesize

{\sc Acknowledgements. }The first-named author was partially supported by the Russian Foundation for Basic Research (Grant~12-01-00873-a) and the State Maintenance Program for the Leading Scientific Schools and Junior Scientists of the Russian Federation (Grant~NSh-921.2012.1). The second-named author's stay at the Sobolov Institute of Mathematics at Novosibirsk was supported by a fellowshop within the Postdoc-Program of the German Academic Exchange Service (DAAD) (Grant~521/915-069-52).

\normalsize

\end{document}